\newtheorem{thm}{Theorem}
\newtheorem{defn}{Definition}
\newtheorem{lemma}{Lemma}
\newtheorem{pro}{Proposition}
\newtheorem{rk}{Remark}
\numberwithin{equation}{section} \setcounter{tocdepth}{1}
\def\R{\mathbb{R}}
\begin{document}
\title[Dynamical systems of a gonosomal operator]
{Dynamical systems generated by a gonosomal evolution operator}

\author{U. A. Rozikov, R. Varro}

\address{U.\ A.\ Rozikov\\ Institute of mathematics,
29, Do'rmon Yo'li str., 100125, Tashkent, Uzbekistan.}
\email {rozikovu@yandex.ru}

 \address{R.\ Varro\\Institut de Math\'ematiques et de Mod\'elisation de Montpellier,
Universit\'e de Montpellier, 35095 Montpellier Cedex 5, France.}
\email {richard.varro@univ-montp3.fr}

\begin{abstract}
In this paper we consider discrete-time dynamical systems generated by gonosomal evolution operators of sex linked inheritance.
Mainly we study dynamical systems of a hemophilia, which biologically is a group of hereditary genetic disorders that
impair the body's ability to control blood clotting or coagulation,
which is used to stop bleeding when a blood vessel is broken.
We give an algebraic model of the biological system corresponding
to the hemophilia. The evolution of such system is studied by a nonlinear (quadratic)
gonosomal operator. In a general setting, this operator is considered as a mapping from $\mathbb R^n$,
$n\geq 2$ to itself.  In particular, for a gonosomal operator at $n=4$
we explicitly give all (two) fixed points. Then limit points of the trajectories of the
corresponding dynamical system are studied. Moreover we consider a normalized
version of the gonosomal operator. In the case $n=4$, for the normalized gonosomal operator
we show uniqueness of fixed point and study limit points of the dynamical system.

 \end{abstract}
\maketitle

{\bf Mathematics Subject Classifications (2010).} 17D92; 17D99.

{\bf{Key words.}} Bisexual population, Gonosomal operator, fixed point, limit point.

\section{Introduction.}

In biology it is important a proper understanding of living populations at all levels.
 The relevant mathematics
undoubtedly requires of nonlinear
analysis, in particular a nonlinear dynamical system,
compounded stochastic processes modeling, and the creative
implementation of the computer methodology.

The action of genes is manifested statistically in sufficiently
large communities of matching individuals (belonging to the same
species). These communities are called {\it populations} \cite{L}.
The population exists not only in space but also in time, i.e. it
has its own life cycle. The basis for this phenomenon is
reproduction by mating. Mating in a population can be free or
subject to certain restrictions.

The whole population in space and time comprises discrete
generations $F_0, F_1, \dots$ The generation $F_{n+1}$ is the set of
individuals whose parents belong to the $F_n$ generation. A {\it state} of
a population is a distribution of probabilities of the different
types of organisms in every generation.

A type partition is called
differentiation. The simplest example is sex differentiation. In
bisexual population any kind of differentiation must agree with the
sex differentiation, i.e. all the organisms of one type must belong
to the same sex. Thus, it is possible to speak of male and female
types (see for example \cite{Re}, \cite{LR}, \cite{V} for mathematical models of bisexual population).

In many biological systems, sex is determined genetically: males and females have
different alleles or even different genes that specify their sexual morphology.
In animals, this is often accompanied by {\it chromosomal} differences.
Determination genetically is generally through chromosome combinations
of $XY$ (for example: humans, mammals), $ZW$ (birds), $X0$ (in this variant of
the $XY$ system, females have two copies of the sex chromosome ($XX$) but males have only one ($X0$).
 The $0$ denotes the absence of a second sex chromosome.
There are some sex linked systems which depends on temperature and even some of systems
have sex change phenomenon (see \cite{Ro} for a detailed review.)
 We note that the behavior of sex-linked system can be investigated by studying of nonlinear dynamical systems,
 such systems are not fully understood yet.
A search in MathSciNet gives about 15 mathematical papers which are
related to sex linked models (see for example, \cite{GP}, \cite{K}, \cite{K1}, \cite{R}).
In papers \cite{GR}, \cite{RT} we attempted to introduce
thermodynamic methods in biology. In \cite{V} an algebra associated to a sex change is constructed.

In this paper we consider evolution (dynamical system) of a hemophilia. Recall that
{\it hemophilia} is a group of hereditary genetic disorders that
impair the body's ability to control blood clotting or coagulation,
which is used to stop bleeding when a blood vessel is broken.

In the next section we give a mathematical model of the biological system corresponding
to the hemophilia. The evolution of such system will be given by a nonlinear (quadratic)
evolution operator which is called a gonosomal operator. Thus study of the biological system
is reduced to the study of the nonlinear dynamical system generated by the gonosomal operator.
In a general setting, this operator is considered as a mapping from $\mathbb R^n$,
$n\geq 2$ to itself. In Section 4 we give
some detailed properties of the dynamical system. In particular, for a gonosomal operator at $n=4$
we explicitly give all (two) fixed points. Then limit points of the trajectories of the
corresponding dynamical system are studied.   In the last section we consider the normalized
version of the gonosomal operator. In the case $n=4$, for the normalized gonosomal operator
we show uniqueness of fixed point and study limit points of the dynamical system.

\section{Bisexual population: Gonosomal evolution operator}
Type partition is called
differentiation. The simplest example is sex differentiation. In
bisexual population (BP)  any kind of differentiation must agree with the
sex differentiation, i.e. all the organisms of one type must belong
to the same sex. Thus, it is possible to speak of male and female
types.

In many cases,
the sex determination is genetic, in particular, it is controlled by two chromosomes
called gonosomes. Gonosomal inheritance is a mode of inheritance that is
observed for traits related to a gene encoded on the sex chromosomes.

Let us discuss one example of sex-linked inheritance.
Haemophilia is a lethal recessive $X$-linked disorder:
a female carrying two alleles for hemophilia die.
Therefore if we denote by $X^h$ the gonosome $X$ carrying the
hemophilia, there are only two female genotypes: $XX$ and $XX^h$
($X^hX^h$ is lethal) and two male genotypes: $XY$ and $X^hY$.
We have four types of crosses:
\begin{equation}\label{s4}\begin{array}{llll}
XX \times XY \rightarrowtail {1\over 2}XX, \ \ {1\over 2}XY;\\[2mm]
XX \times X^hY \rightarrowtail {1\over 2}XX^h,\ \ {1\over 2}XY;\\[2mm]
XX^h \times XY \rightarrowtail {1\over 4}XX, {1\over 4}XX^h, {1\over 4}XY, {1\over 4}X^hY;\\[2mm]
XX^h \times X^hY \rightarrowtail {1\over 3}XX^h, {1\over 3}XY, {1\over 3}X^hY.
\end{array}
\end{equation}
Let $F=\{XX, XX^h\}$ and $M=\{XY, X^hY\}$ be sets of genotypes.
Assume state of the set $F$ is given by a real vector $(x,y)$ and state of $M$ by a real vector $(u,v)$.
Then a state of $F\cup M$ is given by the vector $s=(x,y,u,v)\in \mathbb R^4$. If $s'=(x',y',u',v')$ is a state of the system
$F\cup M$ in the next generation then by the rule (\ref{s4}) we get the
evolution operator $W:\mathbb R^4\to\mathbb R^4$ defined by
\begin{equation}\label{W4}
W:\left\{\begin{array}{llll}
x'={1\over 2}xu+{1\over 4}yu\\[2mm]
y'={1\over 2}xv+{1\over 4}yu+{1\over 3}yv\\[2mm]
u'={1\over 2}xu+{1\over 2}xv+{1\over 4}yu+{1\over 3}yv\\[2mm]
v'={1\over 4}yu+{1\over 3}yv.
\end{array}\right.
\end{equation}

This example can be generalized: suppose that the set of
female types is $F=\{1,2,\dots,n\}$ and the set of male types is $M=\{1,2,\dots,\nu \}$.
Let $x=(x_1,\dots,x_n)\in \mathbb R^n$ be a state of $F$ and $y=(y_1,\dots,y_{\nu})\in \mathbb R^{\nu}$ be a state of $M$.

 Consider $\gamma_{ik,j}^{(f)}$ and $\gamma_{ik,l}^{(m)}$ as some inheritance
 real coefficients (not necessary probabilities) with
 \begin{equation}\label{v2}
 \sum_{j=1}^n\gamma_{ik,j}^{(f)}+\sum_{l=1}^\nu \gamma_{ik,l}^{(m)}=1.
\end{equation}

Consider an evolution operator $W:\R^{n+\nu}\to\R^{n+\nu}$ defined as
\begin{equation}\label{v3}
W: \left\{\begin{array}{ll}
x'_j=\sum_{i,k=1}^{n,\nu}\gamma_{ik,j}^{(f)}x_iy_k, \ \ j=1,\dots,n  \\[3mm]
 y'_l=\sum_{i,k=1}^{n,\nu} \gamma_{ik,l}^{(m)}x_iy_k, \ \ l=1,\dots,\nu.
\end{array}\right.
\end{equation}
This operator is called gonosomal evolution operator.
This means that the association $s=(x,y)\in \mathbb R^{n+\nu}\to s'=(x',y')\in \mathbb R^{n+\nu}$
defines a map $W$. The
population evolves by starting from an arbitrary state $s$, then
passing to the state $s'= W(s)$ (in the next 'generation'), then to
the state $s''=W(W(s))$, and so on.
 Thus, states of the population described by the following discrete-time dynamical
 system
 \begin{equation}\label{2}
 s^{(0)},\ \ s^{(1)}= W(s^{(0)}), \ \ s^{(2)}=W^{2}(s^{(0)}),\ \  s^{(3)}= W^{3}(s^{(0)}),\dots
\end{equation}
         where $s^{(0)}\in \mathbb R^{n+\nu}$ is a given initial point and  $W^n(s)=\underbrace{W(W(...W}_n(s))...)$ denotes the $n$ times
         iteration of $W$ to $s$.

  The main problem for a given dynamical system is to
describe the limit points of the trajectory $\{s^{(n)}\}_{n=0}^\infty$ for
arbitrary given $s^{(0)}$.

\section{Dynamical system generated by the operator (\ref{W4})}

Note that operator (\ref{v3}) describes evolution of a hemophilia.
The dynamical systems generated by gonosomal operator (\ref{v3}) is complicated.
In this paper we study the dynamical system generated by gonosomal operator (\ref{W4}), which is a particular case of (\ref{v3}), obtained by
$n=\nu=2$ and the following coefficients:
$$\begin{array}{cccc}
\gamma_{11,1}^{(f)}={1\over 2} &\gamma_{11,2}^{(f)}=0& \gamma_{11,1}^{(m)}={1\over 2}&\gamma_{11,2}^{(m)}=0\\[3mm]
\gamma_{12,1}^{(f)}=0 &\gamma_{12,2}^{(f)}={1\over 2}& \gamma_{12,1}^{(m)}={1\over 2}&\gamma_{12,2}^{(m)}=0\\[3mm]
\gamma_{21,1}^{(f)}={1\over 4} &\gamma_{21,2}^{(f)}={1\over 4}& \gamma_{21,1}^{(m)}={1\over 4}&\gamma_{21,2}^{(m)}={1\over 4}\\[3mm]
\gamma_{22,1}^{(f)}=0 &\gamma_{22,2}^{(f)}={1\over 3}& \gamma_{22,1}^{(m)}={1\over 3}&\gamma_{22,2}^{(m)}={1\over 3}
\end{array}$$

\subsection{Fixed points}

\textcompwordmark{}

A point $s$ is called fixed point if $W(s)=s$.
  Let us find all fixed points of $W$ given by (\ref{W4}), i.e. we solve the following system of equations
\begin{equation}\label{W4f}
W:\left\{\begin{array}{llll}
x={1\over 2}xu+{1\over 4}yu\\[2mm]
y={1\over 2}xv+{1\over 4}yu+{1\over 3}yv\\[2mm]
u={1\over 2}xu+{1\over 2}xv+{1\over 4}yu+{1\over 3}yv\\[2mm]
v={1\over 4}yu+{1\over 3}yv.
\end{array}\right.
\end{equation}
First it is easy to see that $s_0=(0,0,0,0)$ is a solution to system (\ref{W4f}).

To find another solution from the first equation of this system we get $(4-2u)x=yu$. Assuming $u=2$ from this equation we get
$y=0$ then the last equation of (\ref{W4f}) gives $v=0$ consequently from the third equation of the system we get $x=2$.
Thus we obtained the solution $s_2=(2,0,2,0)$.

From the last equation we get $(12-4y)v=3yu$, assume first that $y=3$ then this equation gives $u=0$,
consequently the first equation of the system (\ref{W4f}) gives $x=0$. Then from the second equation we get $v=3$. But these values do not satisfy the third equation of the system. Assume now $u\ne 2$ and $y\ne 3$ then we have
\begin{equation}\label{xv}
x={yu\over 4-2u}, \ \ v={3yu\over 12-4y}.
\end{equation}
Using (\ref{xv}) from the second and third equations of the system (\ref{W4f}) we obtain
$$
\left\{\begin{array}{ll}
16(3-y)(2-u)=3u(8-4u+yu)\\[2mm]
16(3-y)(2-u)=y(24-yu).
\end{array}\right.$$
From the first equation of the last system we find
$$y={12(u^2-6u+8)\over 3u^2-16u+32}.$$
Substituting this to the second equation of the last system we obtain the following
equation
\[
\left(u-2\right)^{2}\left(u-8\right)\left(3u^{2}-14u+24\right)=0.
\]
This equation gives $u=2$, $u=8$  and  $3u^2-14u+24=0$.
For case $u=2$ we have solution $s_2$ mentioned above. The case $u=8$ gives $y=3$ which
does not give solution of the system (\ref{W4f}) as was discussed above.
Thus only remains $3u^2-14u+24=0$ which does not have real solutions.

We proved the following
\begin{pro}\label{pf} The gonosomal operator (\ref{W4}) has exactly two fixed points: $s_0=(0,0,0,0)$ and $s_2=(2,0,2,0)$.
\end{pro}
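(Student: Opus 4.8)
The plan is to solve the fixed-point system (\ref{W4f}) directly by elimination, separating out the degenerate cases that make the relevant denominators vanish. First I would note that $s_0=(0,0,0,0)$ is visibly a solution, and then search for any nonzero fixed point. The natural first move is to isolate two of the four unknowns: rewriting the first equation as $(4-2u)x=yu$ and the fourth as $(12-4y)v=3yu$ expresses $x$ and $v$ linearly in terms of $y$ and $u$, provided $u\neq 2$ and $y\neq 3$. This suggests treating $u$ and $y$ as the free variables and reducing the four equations to two equations in $(u,y)$.

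Before doing that, I would dispose of the two denominator-vanishing cases. If $u=2$, the first equation forces $y=0$, whence the fourth gives $v=0$, and the third then collapses to $x=2$, producing exactly $s_2=(2,0,2,0)$. If $y=3$, the fourth equation forces $u=0$, the first then gives $x=0$, and the second yields $v=3$; but substituting these values back into the third equation gives $u=3\neq 0$, a contradiction, so this branch contributes no fixed point.

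In the generic case $u\neq 2$, $y\neq 3$, I would substitute $x=\frac{yu}{4-2u}$ and $v=\frac{3yu}{12-4y}$ into the second and third equations. After clearing denominators these become two polynomial relations between $u$ and $y$; solving one of them for $y$ as a rational function of $u$ and inserting the result into the other should collapse the whole system to a single univariate polynomial equation in $u$. I expect this elimination to factor as $(u-2)^2(u-8)(3u^2-14u+24)=0$. The root $u=2$ merely reproduces $s_2$, and $u=8$ leads back to $y=3$, already excluded; the quadratic factor $3u^2-14u+24$ has negative discriminant $196-288<0$, hence no real roots. Thus no new real fixed point appears, and the list $\{s_0,s_2\}$ is complete.

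The main obstacle is purely computational: the substitution and denominator-clearing in the generic case produce bulky expressions, and some care is needed to carry out the elimination correctly and to recover the clean factorization. Conceptually the structure is straightforward—the only subtlety is to treat the two degenerate cases $u=2$ and $y=3$ exhaustively, so that dividing by $4-2u$ and $12-4y$ is legitimate throughout the remaining analysis.
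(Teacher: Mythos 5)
Your proposal is correct and follows essentially the same route as the paper's own proof: the same case split on $u=2$ and $y=3$, the same substitution $x=\tfrac{yu}{4-2u}$, $v=\tfrac{3yu}{12-4y}$ into the remaining two equations, and the same elimination leading to the factorization $(u-2)^2(u-8)(3u^2-14u+24)=0$ with the quadratic factor having no real roots. Your explicit checks (the contradiction $u=3\neq 0$ in the $y=3$ branch, and the negative discriminant $196-288<0$) only make explicit what the paper asserts without computation.
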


\subsection{The type of the fixed points}

\textcompwordmark{}

Now we shall examine the type of the fixed points.

\begin{defn} (see \cite{D}). A fixed point $s$ of the operator $W$ is called hyperbolic if
its Jacobian $J$ at $s$ has no eigenvalues on the unit circle.
\end{defn}

\begin{defn} (see \cite{D}). A hyperbolic fixed point $s$ is called:
\begin{itemize}
\item attracting if all the eigenvalues of the Jacobi matrix $J(s)$ are less than 1 in
absolute value;
\item repelling if all the eigenvalues of the Jacobi matrix $J(s)$ are greater than 1 in
absolute value;
\item a saddle otherwise.
\end{itemize}
\end{defn}

To find the type of a fixed point of the operator (\ref{W4}) we write
the Jacobi matrix:
$$J(s)=J_W=\left(\begin{array}{cccc}
{1\over 2}u&{1\over 4}u&{1\over 2}x+{1\over 4}y&0\\[3mm]
{1\over 2}v&{1\over 4}u+{1\over 3}v&{1\over 4}y&{1\over 2}x+{1\over 3}y\\[3mm]
{1\over 2}u+{1\over 2}v&{1\over 4}u+{1\over 3}v&{1\over 2}x+{1\over 4}y&{1\over 2}x+{1\over 3}y\\[3mm]
0&{1\over 4}u+{1\over 3}v&{1\over 4}y&{1\over 3}y
\end{array}\right).$$
It is easy to see that $J(s_0)$ has all eigenvalues equal to 0, therefore
$s_0$ is an attracting point.

The Jacobian $J(s_2)$ has eigenvalues $-{1\over 2}, 0, 1, 2$, therefore the fixed point is not hyperbolic.

\subsection{Dynamics on invariant sets}

\textcompwordmark{}

A set $A$ is called invariant with respect to $W$ if $W(A)\subset A$.

Denote
\begin{eqnarray*}
O & = & \{(0,0,u,v)\in\mathbb{R}^{4}:u,v\in\mathbb{R}\}\cup\{(x,y,0,0)\in\mathbb{R}^{4}:x,y\in\mathbb{R}\},\\
I & = & \{s=(x,y,u,v)\in\mathbb{R}^{4}:y=v=0\},\\
J & = & \{s\in I:x=u\},\\
P & = & \{s=(x,y,u,v)\in\mathbb{R}^{4}:x\geq0,y\geq0,u\geq0,v\geq0\},\\
Q_{a} & = & \{s=(x,y,u,v)\in P:x+y+u+v\leq a\},\qquad a\in[0,4],\\
\mathcal{N} & = & \{s=(x,y,u,v)\in\mathbb{R}^{4}:x\leq0,y\leq0,u\leq0,v\leq0\},\\
\mathcal{N}_{0} & = & \{s=(x,y,u,v)\in\mathbb{R}^{4}:x\leq0,y\leq0,u\geq0,v\geq0\},\\
\mathcal{N}_{1} & = & \{s=(x,y,u,v)\in\mathbb{R}^{4}:x\geq0,y\geq0,u\leq0,v\leq0\}.
\end{eqnarray*}

\begin{lemma}\label{l1}
1) The sets $I$, $J$, $P$ and $Q_a$ ($a\in [0,4]$) are invariant with respect to $W$.

2) $W(O)=\{(0,0,0,0)\}$.

3) $W(Q_a)\subset Q_{a^2/4}$.

4) $W(\mathcal N)\subset P$.

5) $W(\mathcal N_0)\subset \mathcal N$, \ \ $W(\mathcal N_1)\subset \mathcal N$.
\end{lemma}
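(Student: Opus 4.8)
The common thread through all five parts is a single structural observation about (\ref{W4}): every monomial on the right-hand side is a product of exactly one female coordinate ($x$ or $y$) with exactly one male coordinate ($u$ or $v$), and every coefficient is nonnegative. Once this is noted, parts 2), 4) and 5) reduce to elementary sign-chasing, and the only genuine computation is needed for part 3).

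The plan is as follows. For part 1) I would substitute the defining constraints directly into (\ref{W4}). Setting $y=v=0$ annihilates every monomial containing $y$ or $v$, leaving $x'=\frac12 xu$, $u'=\frac12 xu$ and $y'=v'=0$; this gives $W(I)\subset I$, and since it forces $x'=u'$ whenever $x=u$, it simultaneously yields $W(J)\subset J$. Invariance of $P$ is immediate, because on the positive orthant each product $x_iy_k\ge0$ and the coefficients are nonnegative, so all four output coordinates are $\ge0$. The invariance of $Q_a$ I would derive as a consequence of part 3), so I turn to that first.

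For part 3) the key step is the sum identity
\[
x'+y'+u'+v'=(x+y)(u+v),
\]
obtained by collecting coefficients: each of $xu$, $yu$, $xv$ and $yv$ occurs with total coefficient $1$, which is exactly the normalization (\ref{v2}). On $Q_a$ all coordinates are nonnegative and $x+y+u+v\le a$, so by the arithmetic--geometric mean inequality $(x+y)(u+v)\le\frac14\big((x+y)+(u+v)\big)^2\le\frac{a^2}{4}$. Combined with the inclusion $W(Q_a)\subset P$ established for $P$ in part 1), this gives $W(Q_a)\subset Q_{a^2/4}$. Invariance of $Q_a$ for $a\in[0,4]$ then follows at once, since $a\le4$ forces $a^2/4\le a$, hence $Q_{a^2/4}\subset Q_a$.

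The remaining parts are pure sign analysis resting on the product structure. For part 2), if either the female block $(x,y)$ or the male block $(u,v)$ vanishes, then every monomial $x_iy_k$ vanishes, so $W$ sends the point to the origin; this handles both pieces of $O$. For parts 4) and 5) I read off the sign of each product: on $\mathcal{N}$ every factor is nonpositive, so each product is nonnegative and $W(\mathcal{N})\subset P$; on $\mathcal{N}_0$ (resp. $\mathcal{N}_1$) every monomial is a nonpositive times nonnegative (resp. nonnegative times nonpositive) product, hence nonpositive, so all four coordinates of the image are $\le0$ and $W(\mathcal{N}_0)\subset\mathcal{N}$, $W(\mathcal{N}_1)\subset\mathcal{N}$. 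I do not expect any real obstacle here; the only nonmechanical ingredient is the sum identity together with the AM--GM estimate in part 3), which is also what pins down the threshold $a\le4$ in the invariance of $Q_a$.
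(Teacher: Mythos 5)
Your proposal is correct and follows essentially the same route as the paper: the paper's proof also rests on the sum identity $x'+y'+u'+v'=(x+y)(u+v)$ combined with the AM--GM bound $\left(\frac{x+y+u+v}{2}\right)^2\leq\frac{a^2}{4}$ to handle $Q_a$, and dismisses the remaining parts as direct consequences of (\ref{W4}). You simply spell out the sign-chasing details (for $I$, $J$, $P$, $O$, $\mathcal{N}$, $\mathcal{N}_0$, $\mathcal{N}_1$) that the paper leaves to the reader, which is a faithful completion rather than a different argument.
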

\begin{proof} We give the proof for $Q_a$, for other sets it simply follows from (\ref{W4}).
Take any $s=(x,y,u,v)\in Q_a$ then we have $0\leq x+y \leq a$ and $0\leq u+v\leq a$.
From (\ref{W4}) we get $x'\geq 0$, $y'\geq 0$, $u'\geq 0$, $v'\geq 0$ and
$$x'+y'+u'+v'=(x+y)(u+v)\leq \left({x+y+u+v\over 2}\right)^2\leq {a^2\over 4}.$$
Thus $s'=(x',y',u',v')\in Q_{a^2\over 4}\subset Q_{a}$.
\end{proof}

Reduce $W$ on $J$ then we get the mapping $x'=f(x)={1\over 2}x^2$. This function has two fixed points $x=0$ and $x=2$. Moreover,
$0$ is attractive ($f'(0)=0<1$) and 2 is repeller ($f'(2)=2>1$). Take an initial point $x_0\in J$ and iterate the function $f$, then we get
$$x_n=f^n(x_0)=2^{-(1+2+2^2...+2^{n-1})}x_0^{2^n}=2^{-2^n+1}x_0^{2^n}=2\left({x_0\over 2}\right)^{2^n}.$$
Hence we have
\[
\lim_{n\to\infty}x_{n}=\left\{ \begin{array}{lll}
0, & \mbox{if} & \left|x_{0}\right|<2\\[2mm]
2, & \mbox{if} & \left|x_{0}\right|=2\\[2mm]
+\infty, & \mbox{if} & \left|x_{0}\right|>2.
\end{array}\right.
\]

Now reduce the operator $W$ on $I$:
\[
V\::\left\{ \begin{array}{lll}
x' & = & \frac{1}{2}xu\\[2mm]
u' & = & \frac{1}{2}xu.\\[2mm]
\end{array}\right.
\]
Thus for any $t_0=(x_0,0,u_0,0)\in I$ we have $W(x_0,0,u_0,0)\in J$.
Consequently, we have full characterization of the dynamical system on the invariant set $J$, i.e., we proved the following
\begin{pro}
For any initial point $t_0=(x_0,0,u_0,0)\in I$ we have
\[
\lim_{n\to\infty}W^{n}(t_{0})=\left\{ \begin{array}{lll}
(0,0,0,0) & \mbox{if} & \left|x_{0}u_{0}\right|<4\\[2mm]
(2,0,2,0) & \mbox{if} & \left|x_{0}u_{0}\right|=4\\[2mm]
+\infty, & \mbox{if} & \left|x_{0}u_{0}\right|>4.
\end{array}\right.
\]
\end{pro}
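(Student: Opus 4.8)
The plan is to reduce the four-dimensional dynamics on $I$ to the one-dimensional dynamics on $J$ that has already been completely described, using the observation recorded just before the statement that $W(I)\subset J$. Indeed, for any $t_0=(x_0,0,u_0,0)\in I$ the reduced operator $V$ gives
\[
W(t_0)=\left(\tfrac12 x_0u_0,\,0,\,\tfrac12 x_0u_0,\,0\right),
\]
and since its first and third coordinates coincide, $W(t_0)\in J$. Thus after a single iteration the orbit enters the invariant line $J$, on which $W$ acts as the scalar map $f(x)=\tfrac12 x^2$, whose behavior has already been settled.

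First I would set $c:=\tfrac12 x_0u_0$, so that $t_1:=W(t_0)=(c,0,c,0)\in J$. For $n\ge 1$ we then have $W^n(t_0)=W^{n-1}(t_1)=\big(f^{\,n-1}(c),0,f^{\,n-1}(c),0\big)$, because $J$ is invariant and $W$ restricted to $J$ equals $f$. Next I would substitute $c$ for $x_0$ in the closed form already derived on $J$, namely $f^{\,m}(c)=2(c/2)^{2^{m}}$, to obtain the explicit expression for both nonzero coordinates of $W^n(t_0)$.

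The conclusion then follows by reading off $\lim_{m\to\infty}f^{\,m}(c)$ from the trichotomy established on $J$ and translating the thresholds $|c|<2$, $|c|=2$, $|c|>2$ back through the relation $|c|=\tfrac12|x_0u_0|$ into the stated conditions $|x_0u_0|<4$, $=4$, $>4$. There is essentially no hard step here: the only points requiring a little care are (i) the interpretation of the divergent case, where one checks that for $n\ge 2$ the exponent $2^{\,n-1}$ is even, so $(c/2)^{2^{n-1}}>0$ and hence $f^{\,n-1}(c)\to+\infty$, guaranteeing $\|W^n(t_0)\|\to\infty$; and (ii) the bookkeeping of the index shift caused by the initial iteration that moves the orbit from $I$ into $J$, which merely replaces $n$ by $n-1$ and leaves the limit unchanged.
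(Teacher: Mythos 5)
Your proof is correct and takes essentially the same route as the paper: the paper likewise reduces $W$ on $I$ to $x'=u'=\tfrac12 xu$, notes that one iteration lands the orbit in the invariant line $J$ where $W$ acts as $f(x)=\tfrac12 x^2$ with closed form $f^{m}(x_0)=2\left(x_0/2\right)^{2^{m}}$, and reads off the trichotomy by translating the threshold $|c|=2$ into $|x_0u_0|=4$. Your explicit checks of the sign in the divergent case and of the index shift merely spell out details the paper leaves implicit.
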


Let us now consider the dynamical system on the other sets (which may intersect with $J$).

\begin{lemma}\label{l2} Let $a\in [0,4)$. Then for any initial point $s=(x,y,u,v)\in Q_a$ we have
\begin{equation}\label{00}
\lim_{n\to\infty}W^n(s)=(0,0,0,0).
\end{equation}
\end{lemma}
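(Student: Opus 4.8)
The plan is to exploit the shrinking property $W(Q_a)\subset Q_{a^2/4}$ established in part 3 of Lemma~\ref{l1} and to track how the bound on the coordinate-sum decreases under iteration. Define a scalar sequence by $a_0=a$ and $a_{n+1}=a_n^2/4$. Applying part 3 of Lemma~\ref{l1} repeatedly (and the invariance of each $Q_b$ from part 1), one shows by induction that $W^n(s)\in Q_{a_n}$ for every $n\geq 0$. Writing $W^n(s)=(x_n,y_n,u_n,v_n)$, membership in $Q_{a_n}\subset P$ yields that each coordinate is nonnegative and that $x_n+y_n+u_n+v_n\leq a_n$.

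The second step is to show that $a_n\to 0$ precisely under the hypothesis $a<4$. Setting $b_n=a_n/4$ turns the recurrence into $b_{n+1}=b_n^2$, so that $b_n=b_0^{2^n}=(a/4)^{2^n}$ and hence $a_n=4\,(a/4)^{2^n}$. Since $a\in[0,4)$ we have $0\leq a/4<1$, so $(a/4)^{2^n}\to 0$ and therefore $a_n\to 0$ as $n\to\infty$.

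Finally I would combine the two steps: for each coordinate we have, say, $0\leq x_n\leq x_n+y_n+u_n+v_n\leq a_n$, and likewise for $y_n,u_n,v_n$. Since $a_n\to 0$, the squeeze principle forces every coordinate of $W^n(s)$ to tend to $0$, which is exactly the limit \eqref{00}.

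I do not expect a serious obstacle here; the only point requiring care is the essential role of the strict inequality $a<4$. The value $a=4$ is a genuine threshold, since there $a_n\equiv 4$ and the contraction argument collapses; this is consistent with the non-attracting fixed point $s_2=(2,0,2,0)$ lying on the boundary $x+y+u+v=4$, and it is precisely why the lemma restricts to $a\in[0,4)$.
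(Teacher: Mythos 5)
Your proposal is correct and follows essentially the same route as the paper: both iterate part 3 of Lemma~\ref{l1} to get $W^n(s)\in Q_{f^n(a)}$ with $f(a)=a^2/4$, and then show $f^n(a)\to 0$ for $a\in[0,4)$. The only (cosmetic) difference is that you compute the orbit explicitly, $f^n(a)=4\,(a/4)^{2^n}$, and finish with a coordinate-wise squeeze, whereas the paper invokes the attracting/repelling structure of the fixed points $0$ and $4$ of $f$; if anything, your version spells out the final squeezing step that the paper leaves implicit.
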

\begin{proof} Let $f(a)=a^2/4$. By Lemma \ref{l1} we have
$$W^n(Q_a)\subset W^{n-1}(Q_{f(a)})\subset W^{n-2}(Q_{f^2(a)})\subset\dots\subset Q_{f^n(a)}.$$
It is easy to see that $f(x)$ has two fixed points $0$ and $4$. Moreover, $0$ is attracting
point and $4$ is repelling point. For any $a\in [0,4)$ we have $\lim_{n\to\infty}f^n(a)=0$.
Consequently,  we get $\lim_{n\to\infty}W^n(Q_a)\subset Q_0=\{(0,0,0,0)\}.$
\end{proof}
\begin{lemma}\label{l3} For an initial point $s=(x,y,u,v)\in Q_4$ the following hold
\begin{itemize}
\item[i.] if there is $k\geq 0$ such that $y^{(k)}v^{(k)}\ne 0$ then (\ref{00}) is satisfied,

\item[ii.] if $y^{(k)}v^{(k)}=0$ for any $k\geq 0$ then
\begin{equation}\label{02}
\lim_{n\to\infty}W^n(s)=(2,0,2,0),
\end{equation}
where $y^{(k)}$ and $v^{(k)}$ are second and fourth coordinates of the vector $W^k(s)$.
\end{itemize}
\end{lemma}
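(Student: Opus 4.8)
The plan is to organise everything around the two \emph{mass} variables $A=x+y$ and $B=u+v$. A direct computation from (\ref{W4}) yields the two identities
\begin{equation*}
A'+B'=x'+y'+u'+v'=(x+y)(u+v)=AB,\qquad B'-A'=\tfrac{1}{3}yv ,
\end{equation*}
on which the whole argument rests. The first is the mass law already exploited in Lemma \ref{l1}, while the second shows that the \emph{defect} $B-A$ of the next iterate is controlled precisely by the product $yv$ whose vanishing indexes the dichotomy.

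For part (i), suppose $y^{(k)}v^{(k)}\neq0$ for some $k$. Evaluating the second identity at $s^{(k)}$ gives $B^{(k+1)}-A^{(k+1)}=\tfrac13 y^{(k)}v^{(k)}\neq0$, so $A^{(k+1)}\neq B^{(k+1)}$. Since $Q_4$ is invariant (Lemma \ref{l1}), we have $s^{(k+1)}\in Q_4$, hence $A^{(k+1)},B^{(k+1)}\geq0$ and $A^{(k+1)}+B^{(k+1)}\leq4$; as the two quantities are unequal, the AM--GM inequality is \emph{strict}, so the mass of the next iterate obeys
\begin{equation*}
x^{(k+2)}+y^{(k+2)}+u^{(k+2)}+v^{(k+2)}=A^{(k+1)}B^{(k+1)}<\Big(\tfrac{A^{(k+1)}+B^{(k+1)}}{2}\Big)^{2}\leq4 .
\end{equation*}
Thus $s^{(k+2)}\in Q_a$ for some $a\in[0,4)$, and Lemma \ref{l2} gives $W^{n}(s)\to(0,0,0,0)$, which is (\ref{00}).

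For part (ii), assume $y^{(k)}v^{(k)}=0$ for every $k$. The second identity then forces $A^{(n)}=B^{(n)}$ for all $n\geq1$, so the orbit lives on the diagonal of the mass variables. Granting that the total mass is \emph{preserved} at its maximal value $4$, we get $A^{(n)}=B^{(n)}=2$ for all $n$; feeding $y^{(n)}v^{(n)}=0$ back into (\ref{W4}) one checks that the remaining mass is pushed into the $x,u$ coordinates and that the orbit is driven onto the invariant set $I$, where the proposition on $W|_{I}$ (convergence to $(2,0,2,0)$ exactly when $|x_0u_0|=4$) yields $y^{(n)},v^{(n)}\to0$ and $x^{(n)},u^{(n)}\to2$, i.e.\ (\ref{02}).

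The main obstacle is precisely this mass–preservation claim, and it is genuinely delicate: the condition $y^{(k)}v^{(k)}=0$ \emph{alone} does not keep the mass at $4$. For instance every point of $I\cap Q_4$ with $x_0u_0<4$ satisfies it, yet by the behaviour of $W|_{I}$ it collapses to the origin. Hence the argument cannot use the vanishing hypothesis in isolation; I would run it on the level set $\{x+y+u+v=4\}$, where the strict AM--GM estimate of part (i) shows that the mass can remain equal to $4$ only while $A^{(n)}=B^{(n)}=2$, and then verify that (\ref{02}) is the sole outcome compatible with $A^{(n)}=B^{(n)}=2$ together with $y^{(n)}v^{(n)}=0$ for all $n$. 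The complementary points, of mass strictly below $4$, are then handled by part (i) and Lemma \ref{l2}; making the boundary regime $x+y+u+v=4$ an explicit standing hypothesis is, I expect, what the statement really needs.
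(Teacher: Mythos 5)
Your part (i) is correct, and in fact tidier than the paper's: the paper first reduces, by a WLOG case analysis, to the configuration $x+y=2$, $u+v=2$ of (\ref{x2}), and only there computes $x'+y'=2-\tfrac{yv}{6}$, $u'+v'=2+\tfrac{yv}{6}$ (its (\ref{x4})); your global identity $(u'+v')-(x'+y')=\tfrac13 yv$, combined with the mass law and strict AM--GM, drops the total mass strictly below $4$ two steps after any index $k$ with $y^{(k)}v^{(k)}\neq0$, and Lemma \ref{l2} finishes. Your diagnosis of part (ii) is also a genuine insight: as literally stated it is false --- any point $(x,0,u,0)\in Q_4$ with $xu<4$, e.g.\ $(1,0,1,0)$, satisfies the hypothesis of (ii) yet converges to the origin --- and indeed the paper's own proof of branch (ii) is carried out explicitly under the extra assumption (\ref{x2}), so the written statement is looser than what is actually proved.

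However, your repair of (ii) leaves two real gaps. First, the standing hypothesis you propose, $x+y+u+v=4$, is still too weak: the point $(3,0,1,0)$ lies on that level set and has $y^{(k)}v^{(k)}=0$ for every $k$, yet its mass drops to $(x+y)(u+v)=3$ after one step and it converges to the origin, not to $(2,0,2,0)$. The hypothesis that actually closes the statement is (\ref{x2}) itself, $x+y=2$ \emph{and} $u+v=2$; only then do your two identities propagate $A^{(n)}=B^{(n)}=2$ to all $n$ by induction. Second, and more seriously, the heart of branch (ii) is exactly the step you leave as ``one checks''/``verify'': that $A^{(n)}=B^{(n)}=2$ together with $y^{(n)}v^{(n)}=0$ for all $n$ forces $y^{(n)}=v^{(n)}=0$ for all $n$ (so the point is the fixed point itself). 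This is the paper's Lemma \ref{l4}, and it is not automatic, because the hypothesis a priori permits alternating patterns in which $y^{(n)}$ and $v^{(n)}$ take turns vanishing; your phrase ``the orbit is driven onto the invariant set $I$'' glosses over precisely this possibility. The paper excludes it by deriving from (\ref{W4}), under (\ref{x5}) and (\ref{k}), the recursions $y^{(k+1)}=v^{(k)}+v^{(k+1)}$ and $v^{(k+1)}=\tfrac12 y^{(k)}$, and then arguing in two cases: if $v^{(0)}=0$ one gets $y^{(n)}=v^{(n)}=0$ for all $n$; if $y^{(0)}=0$ and $v^{(0)}=v\neq0$, then $y^{(1)}=v$, $v^{(1)}=0$, hence $y^{(2)}=v^{(2)}=\tfrac12 v$ and $y^{(2)}v^{(2)}\neq0$, contradicting (\ref{k}). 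Without this computation your treatment of (ii) is an assertion rather than a proof; with it, and with (\ref{x2}) replacing the level-set hypothesis, your outline becomes a complete argument essentially along the paper's lines.
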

\begin{proof}
 In the case $a=4$ we have $0\leq x+y+u+v\leq 4$. From this inequality it follows that
$$0\leq x+y\leq 2\ \ \mbox{or} \ \  0\leq u+v\leq 2,$$
if both $x+y$ and $u+v$ large than $2$ then their sum is large than 4.
Without loss of generality we assume that $t=x+y\leq 2$ then $u+v\leq 4-t$.
Hence we have
$$ x'+y'+u'+v'=(x+y)(u+v)\leq t(4-t)=\left\{\begin{array}{ll}
4, \ \ \mbox{if} \ \ t=2\\[3mm]
<4, \ \ \mbox{if} \ \ t<2
\end{array}\right.
$$
hence $W(s)\in Q_{t(4-t)}$, i.e., for $t<2$ the case is reduced to the case of $a<4$.
Consider now the case $t=2$. Then if $u+v<2$ we can reduce the case to the case $a<4$.
But if
\begin{equation}\label{x2}
t=x+y=2 \ \ \mbox{and} \ \ u+v=2
\end{equation}
 then from (\ref{W4}) we get
\begin{equation}\label{x4}
x'+y'+u'+v'=4, \ \ x'+y'=2-{yv\over 6} \ \ \mbox{and}   \ \  u'+v'=2+{yv\over 6}.
\end{equation}
Consequently, if $yv\ne 0$ then $W^2(s)\in Q_{4-({yv\over 6})^2}$. By (\ref{x2})
we have $0\leq y\leq 2$ and $0\leq v\leq 2$, hence $0<4-({yv\over 6})^2<4$. Thus condition (\ref{x2}) together
with $yv\ne 0$, by Lemma \ref{l2} gives (\ref{00}).

Let now $yv=0$ then (\ref{x4}) is reduced to the case (\ref{x2}). Repeating above argument we see that if $y'v'\ne 0$ then
$W^3(s)\in Q_{4-({y'v'\over 6})^2}$, otherwise we iterate the argument again. By this way one can show that if
(\ref{x2}) is satisfied and there exists $k\geq 0$ such that $y^{(k)}v^{(k)}\ne 0$ then we have (\ref{00}).

Suppose now (\ref{x2}) is satisfied and
\begin{equation}\label{k}
y^{(k)}v^{(k)}=0 \ \ \mbox{for any} \ \ k\geq 0
\end{equation}
then similarly to (\ref{x4}) we get
 \begin{equation}\label{x5}
x^{(n)}+y^{(n)}+u^{(n)}+v^{(n)}=4, \ \ x^{(n)}+y^{(n)}=2 \ \ \mbox{and}   \ \  u^{(n)}+v^{(n)}=2, \ \ \mbox{for any} \ \ n\geq 0.
\end{equation}
To complete the proof we need to the following
\begin{lemma}\label{l4} If conditions (\ref{x2}) and (\ref{k}) are satisfied then
$$y^{(k)}=v^{(k)}=0 \ \ \mbox{for any} \ \ k\geq 0.$$
\end{lemma}
\begin{proof} From (\ref{W4}) we get
\begin{equation}\label{ky}
\begin{array}{ll}
y^{(k+1)}={1\over 2}x^{(k)}v^{(k)}+v^{(k+1)}\\[3mm]
v^{(k+1)}=\left({1\over 4}u^{(k)}+{1\over 3}v^{(k)}\right)y^{(k)}.
\end{array}
\end{equation}
Now using (\ref{x5}) and (\ref{k}) from (\ref{ky}) we get
\begin{equation}\label{ky1}
\begin{array}{ll}
y^{(k+1)}={1\over 2}(2-y^{(k)})v^{(k)}+v^{(k+1)}=v^{(k)}+v^{(k+1)}\\[3mm]
v^{(k+1)}=\left({1\over 4}(2-v^{(k)})+{1\over 3}v^{(k)}\right)y^{(k)}={1\over 2}y^{(k)}.
\end{array}
\end{equation}
If $v^{(0)}=0$ then from the first equation of (\ref{ky1}) we get $y^{(1)}=v^{(1)}=0$.
Consequently the second equation gives $v^{(2)}=0$. Then using
the first equation we get $y^{(2)}=0$ and so on, we get $y^{(k)}=v^{(k)}=0$ for any $k\geq 0.$

If $y^{(0)}=0$ then the second equation gives $v^{(1)}=0$. Assume $v^{(0)}=v\ne 0$ then from the first equation
we get $y^{(1)}=v+v^{(1)}=v$. Then $v^{(2)}={1\over 2}v$. Consequently, $y^{(2)}={1\over 2}v$. Now condition $y^{(2)}v^{(2)}=0$ gives $v=0$.
This completes the proof.
\end{proof}
Now by Lemma \ref{l4} and property (\ref{x5}) we get
$$x^{(n)}=2 \ \ \mbox{and}   \ \  u^{(n)}=2, \ \ \mbox{for any} \ \ n\geq 0.$$
This completes the proof Lemma \ref{l3}.
 \end{proof}
\begin{lemma}\label{l5} If $s=(x,y,u,v)\in P$ is an initial point with $x+y+u+v>4$, for which
\begin{itemize}
\item[(a)] if there exists $k\geq 0$ such that $(x^{(k)}+y^{(k)})(u^{(k)}+v^{(k)})<4$ then (\ref{00}) is satisfied.
\item[(b)] if $\max\{{xu\over 4}, {yu\over 16}, {yv\over 9}\}>1$ then
$$\lim_{n\to\infty}W^n(s)=\infty,  \ \ \mbox{i.e. at least one coordinate of}\ \ W^n(s) \ \ \mbox{goes to}\ \ \infty.$$
\end{itemize}
\end{lemma}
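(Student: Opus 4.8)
The plan is to handle (a) and (b) separately, in each case reducing to the scalar dynamics already analysed in Lemmas~\ref{l2} and~\ref{l3}. Throughout I will write $p=x+y$, $q=u+v$ and $T=x+y+u+v$, and I will use two standing facts. First, since $s\in P$ and $P$ is invariant by Lemma~\ref{l1}, every coordinate of every iterate $W^{n}(s)$ is non-negative, so each individual summand appearing in (\ref{W4}) is non-negative. Second, I will reuse the identity from the proof of Lemma~\ref{l1}, namely $x'+y'+u'+v'=(x+y)(u+v)$, which in the iterate notation reads $T^{(k+1)}=p^{(k)}q^{(k)}$.

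For part (a) the argument should be short. Suppose $p^{(k)}q^{(k)}=(x^{(k)}+y^{(k)})(u^{(k)}+v^{(k)})<4$ for some $k\ge 0$. By the sum identity this is exactly $T^{(k+1)}<4$, so $W^{k+1}(s)$ lies in $P$ with coordinate sum below $4$; that is, $W^{k+1}(s)\in Q_{a}$ with $a=T^{(k+1)}\in[0,4)$. Applying Lemma~\ref{l2} to the point $W^{k+1}(s)$ gives $\lim_{m\to\infty}W^{m}\big(W^{k+1}(s)\big)=(0,0,0,0)$, hence $\lim_{n\to\infty}W^{n}(s)=(0,0,0,0)$, which is (\ref{00}).

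For part (b) the idea is that each of the three quantities $xu$, $yu$, $yv$ drives a pair of coordinates whose product satisfies a self-contained quadratic recurrence that diverges. Reading off non-negative summands from (\ref{W4}) gives the bounds $x'\ge\frac12 xu$ and $u'\ge\frac12 xu$; $y'\ge\frac14 yu$ and $u'\ge\frac14 yu$; and $y'\ge\frac13 yv$ and $v'\ge\frac13 yv$. Multiplying the two bounds in each pair yields
\[
x'u'\ge\frac{(xu)^{2}}{4},\qquad y'u'\ge\frac{(yu)^{2}}{16},\qquad y'v'\ge\frac{(yv)^{2}}{9}.
\]
Writing $w_{k}$ for the relevant product and $g_{c}(w)=w^{2}/c$ with $c\in\{4,16,9\}$, these say $w_{k+1}\ge g_{c}(w_{k})$. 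Since $g_{c}$ has fixed points $0$ and $c$, is increasing on $[0,\infty)$, and satisfies $g_{c}^{n}(w)=c\,(w/c)^{2^{n}}\to\infty$ for $w>c$, any starting value $w_{0}>c$ is pushed to $+\infty$. The hypothesis $\max\{xu/4,\,yu/16,\,yv/9\}>1$ says precisely that one of the three initial products exceeds its threshold $c$, so the matching pair of coordinates blows up and $W^{n}(s)\to\infty$.

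I expect the only delicate point to be the monotone-iteration step in (b): I must verify that $w_{k+1}\ge g_{c}(w_{k})$ with $w_{0}>c$ really forces $w_{k}\to\infty$. This follows because $w_{0}>c$ gives $w_{1}\ge g_{c}(w_{0})=w_{0}^{2}/c>c$, and then, inductively using monotonicity of $g_{c}$ on $[0,\infty)$, $w_{k}\ge g_{c}^{k}(w_{0})\to\infty$. Everything else is a direct consequence of non-negativity on $P$, the sum identity, and Lemma~\ref{l2}; the standing hypothesis $T>4$ only serves to place $s$ outside the regime already settled in Lemmas~\ref{l2}--\ref{l3}.
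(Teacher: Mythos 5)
Your proof is correct and follows essentially the same route as the paper: part (a) via the sum identity $x'+y'+u'+v'=(x+y)(u+v)$ combined with Lemma~\ref{l2}, and part (b) via the non-negativity lower bounds from (\ref{W4}) feeding a squaring recurrence that diverges once the normalized product exceeds $1$. The only difference is cosmetic: the paper iterates the coordinate bounds directly for the case $xu/4>1$ and declares the other two cases ``similar,'' whereas you package all three cases uniformly through the product recurrence $w_{k+1}\ge w_k^2/c$, $c\in\{4,16,9\}$, which is a slightly cleaner way of writing the identical estimate.
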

\begin{proof} (a) This simply follows from the equality
$$x^{(k+1)}+y^{(k+1)}+u^{(k+1)}+v^{(k+1)}=(x^{(k)}+y^{(k)})(u^{(k)}+v^{(k)}).$$
Indeed, from this equality it follows that $W(s^{(k+1)})\in Q_4$. Since $Q_4$ is invariant
the part (a) follows from Lemma \ref{l2}.

(b) Let us prove it for the case $\max\{{xu\over 4}, {yu\over 16}, {yv\over 9}\}={xu\over 4}>1$. For other cases the proof is similar.
  From (\ref{W4}) for any $s=(x,y,u,v)\in P$ we get
\begin{equation}\label{xyv}
x^{(k+1)}\geq {1\over 2}x^{(k)}u^{(k)}, \ \ \ u^{(k+1)}\geq {1\over 2}x^{(k)}u^{(k)}, \ \ k\geq 0.
\end{equation}
Iterating these inequalities we obtain
\begin{equation}\label{xyv1}
x^{(k+1)}\geq\frac{1}{2}x^{(k)}u^{(k)}\geq2^{-(1+2+2^{2}+\dots+2^{k})}(xu)^{2^{k}}=2\left(\frac{xu}{4}\right)^{2^{k}},\quad k\geq0.
\end{equation}
Similarly
\[
u^{(k+1)}\geq2\left(\frac{xu}{4}\right)^{2^{k}},\quad k\geq0.
\]

This completes the proof.
\end{proof}

 Denote
\[
P_{0}=\{s=(x,y,u,v)\in P:\;(x+y)(u+v)<4\},
\]
\[
F=\{s=(x,y,u,v)\in P:\;x+y+u+v>4,\;\max\{\tfrac{xu}{4},\tfrac{yu}{16},\tfrac{yv}{9}\}>1\}.
\]

Summarizing above-mentioned results we get the following

\begin{thm}\label{to1}
If $s=(x,y,u,v)\in \mathbb R^4$ is such that
\begin{itemize}
\item[(i)]
one of the following conditions is satisfied
\begin{itemize}
\item[1)] $s\in P_0$;
\item[2)] $s\in Q_4$ and the condition of part i) of Lemma \ref{l3} is hold;
\item[3)] $s\in \mathcal N$, $W(s)\in P_0$;
\item[4)] $s\in \mathcal N_0$, $W^2(s)\in P_0$;
\item[5)] $s\in \mathcal N_1$, $W^2(s)\in P_0$
\end{itemize}
then
$$\lim_{n\to\infty}W^n(s)=(0,0,0,0).$$
\item[(ii)] one of the following conditions is satisfied
\begin{itemize}
\item[a)] $s\in F$;
\item[b)] $s\in \mathcal N$, $W(s)\in F$;
\item[c)] $s\in \mathcal N_0$, $W^2(s)\in F$;
\item[d)] $s\in \mathcal N_1$, $W^2(s)\in F$
\end{itemize}
then
$$\lim_{n\to\infty}W^n(s)=+\infty.$$

\end{itemize}
\end{thm}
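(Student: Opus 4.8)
The plan is to treat Theorem \ref{to1} as a synthesis of the preceding lemmas rather than a fresh computation: each of the nine listed cases should reduce, after at most two iterations of $W$, to one of three ``terminal'' results already proved, namely Lemma \ref{l2} (convergence to the origin on the sublevel sets $Q_a$, $a<4$), part i) of Lemma \ref{l3}, or part (b) of Lemma \ref{l5} (escape to infinity). The invariance and mapping relations collected in Lemma \ref{l1}, especially $W(\mathcal N)\subset P$ and $W(\mathcal N_0),W(\mathcal N_1)\subset\mathcal N$, are the glue that pushes points out of the ``negative'' regions into $P$, where the terminal results live.

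The first step is to establish a base-case reduction for $P_0$ that serves cases (i.1), (i.3), (i.4), (i.5). For $s\in P_0$ all four coordinates are nonnegative, so $W(s)\in P$ by (\ref{W4}); moreover the identity $x'+y'+u'+v'=(x+y)(u+v)$ (already used throughout the section) gives $x'+y'+u'+v'=(x+y)(u+v)<4$. Hence $W(s)\in Q_a$ with $a=(x+y)(u+v)<4$, and Lemma \ref{l2} yields $\lim_n W^n(W(s))=(0,0,0,0)$, i.e. (\ref{00}) for $s$. Note this works even when $x+y+u+v>4$: a single step converts the large sum into the product, which membership in $P_0$ forces below $4$.

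Next I would dispatch the remaining convergence cases. Case (i.2) is exactly part i) of Lemma \ref{l3}. For (i.3), $s\in\mathcal N$ gives $W(s)\in P$ by Lemma \ref{l1}(4), and since $W(s)\in P_0$ the base case applied to $W(s)$ gives (\ref{00}). For (i.4) and (i.5), Lemma \ref{l1}(5) gives $W(s)\in\mathcal N$, hence $W^2(s)\in W(\mathcal N)\subset P$; with $W^2(s)\in P_0$ the base case applied to $W^2(s)$ finishes. The divergence part (ii) is parallel: (ii.a) is part (b) of Lemma \ref{l5} verbatim, since $F$ is precisely its hypothesis set; (ii.b) uses $W(s)\in P$ (Lemma \ref{l1}(4)) together with $W(s)\in F$ and Lemma \ref{l5}(b); and (ii.c), (ii.d) land $W^2(s)\in P$ via the chain $\mathcal N_i\to\mathcal N\to P$ and then invoke Lemma \ref{l5}(b) at $W^2(s)$.

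Since the substantive analytic content has already been extracted in Lemmas \ref{l1}--\ref{l5}, I expect the only real care to be bookkeeping: verifying the base-case claim that one application of $W$ sends $P_0$ into some $Q_a$ with $a<4$ (so that Lemma \ref{l2} is applicable), and tracking the correct number of iterations---one from $\mathcal N$, two from $\mathcal N_0$ and $\mathcal N_1$---needed to enter $P$ before the terminal lemma is applied. No single step is an obstacle; the main risk is mismatching a region with the wrong iterate count.
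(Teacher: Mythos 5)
Your proposal is correct and follows essentially the same route as the paper: both treat the theorem as pure bookkeeping over Lemmas \ref{l1}--\ref{l5}, with case (i.1) handled via the identity $x'+y'+u'+v'=(x+y)(u+v)$ (which is exactly the content of Lemma \ref{l5}(a)) together with Lemma \ref{l2}, case (i.2) being Lemma \ref{l3}(i), the $\mathcal N$, $\mathcal N_0$, $\mathcal N_1$ cases reduced by the containments of Lemma \ref{l1}, and all of part (ii) delegated to Lemma \ref{l5}(b) applied at the appropriate iterate. Your write-up is in fact slightly more explicit than the paper's terse proof, but there is no substantive difference in method.
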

\begin{proof} (i) The case 1) follows from Lemma \ref{l2} and Lemma \ref{l5}. The case 2) is result of Lemma \ref{l3}. By Lemma \ref{l1} we have $W(\mathcal N)\subset P$, $W^2(\mathcal N_0)\subset P$ and $W^2(\mathcal N_1)\subset P$. Consequently, parts 3)-5) follow from 1).

 Part (ii) is a result of Lemma \ref{l5}
and Lemma \ref{l1}.

\end{proof}
\begin{rk} The sum of sets for initial points considered in Theorem \ref{to1} is not equal to $\R^4$.
But for each fixed point
this theorem already gives a large set for the initial point, trajectory of which
converges to the fixed point. Each point $s\in \mathbb R^n$ can be considered as a
state of the system, which is a
generalized measure (or charge) on the set $\{XX,XX^n,XY,X^hY\}$. We considered such measure, because for certain purposes,
it is useful to have a "measure" whose values are not restricted to the non-negative reals or infinity. Moreover, the dynamical
systems considered in this section are interesting because they are higher dimensional and such dynamical
systems are important, but there are relatively few
dynamical phenomena that are currently understood \cite{D}, \cite{GMR}.
In the next section we reduce our operators to the invariant set of vectors
  with non-negative coordinates (the usual measures which take
non-negative values). Then to get a stochastic system of probability measures we use a normalization of the
non-negative measures.
\end{rk}
\section{A normalized gonosomal opertor}
We note that the gonosomal operator (\ref{v3}) does not map the simplex
$$S^{n+\nu-1}=\left\{s=(x_1, \dots, x_n, y_1,\dots,y_\nu)\in \R^{n+\nu}: x_i\geq 0, y_j\geq 0, \sum_{i=1}^nx_i+\sum_{j=1}^\nu y_j=1\right\}$$
to itself, since
\begin{equation}\label{Z}
\sum_{i=1}^nx'_i+\sum_{j=1}^\nu y'_j=\left(\sum_{i=1}^nx_i\right)\left(\sum_{j=1}^\nu y_j\right)
\end{equation}
is not equal to $1$ in general.

We denote
$${\mathcal O}=\left\{s\in S^{n+\nu-1}: (x_1,\dots,x_n)=(0,\dots,0) \, \mbox{or} \, (y_1,\dots,y_\nu)=(0,\dots,0)\right\}.$$
$${\mathcal S}^{n,\nu}=S^{n+\nu-1}\setminus {\mathcal O}.$$
It is easy to see that $W(\mathcal O)=\{(0,\dots,0)\}$. So the points from $\mathcal O$
do not give any contribution to the dynamical system generated by $W$.

Therefore we introduce the normalized gonasomal operator as the following.
Consider the coefficients of the operator (\ref{v3}) with the following properties
\begin{equation}\label{kp}\begin{array}{ll}
\gamma_{ik,j}^{(f)}\geq 0, \ \ \gamma_{ik,l}^{(m)}\geq 0,\\[3mm]
 \sum_{j=1}^n\gamma_{ik,j}^{(f)}+\sum_{l=1}^\nu \gamma_{ik,l}^{(m)}=1, \ \ \mbox{for all} \ \ i,k,j,l.
\end{array}
\end{equation}
An normalized evolution operator $V$, with coefficients (\ref{kp}) is defined as
\begin{equation}\label{v3n}
V:\left\{ \begin{array}{ll}
x'_{j}\;=\;\dfrac{\sum_{i,k=1}^{n,\nu}\gamma_{ik,j}^{(f)}x_{i}y_{k}}{\Bigl(\sum_{i=1}^{n}x_{i}\Bigr)\left(\sum_{j=1}^{\nu}y_{j}\right)}, & j=1,\dots,n\medskip\\[3mm]
y'_{l}\;=\;\dfrac{\sum_{i,k=1}^{n,\nu}\gamma_{ik,l}^{(m)}x_{i}y_{k}}{\Bigl(\sum_{i=1}^{n}x_{i}\Bigr)\left(\sum_{j=1}^{\nu}y_{j}\right)}, & l=1,\dots,\nu.
\end{array}\right.
\end{equation}
\begin{pro}\label{pn} The operator $V$ defined by (\ref{v3n}) with coefficients (\ref{kp}) maps ${\mathcal S}^{n,\nu}$ to itself if and only if
the following condition
\begin{equation}\label{ec}
\left(\gamma_{ik,1}^{(f)}, \dots, \gamma_{ik,n}^{(f)}, \gamma_{ik,1}^{(m)}, \dots, \gamma_{ik,\nu}^{(m)}\right)\in {\mathcal S}^{n,\nu}, \ \ \mbox{for all}  \ \ i,k.
\end{equation}
is satisfied.
\end{pro}

\begin{proof} {\sl Necessity.} Suppose for any $s\in {\mathcal S}^{n,\nu}$ we have $s'=V(s)\in {\mathcal S}^{n,\nu}$ then we shall show that (\ref{ec})
is satisfied. Assume that (\ref{ec}) is not true, then there is $i_0\in \{1,\dots,n\}$ and $k_0\in\{1,\dots,\nu\}$ such that
\begin{equation}\label{gx}
\left(\gamma_{i_0k_0,1}^{(f)}, \dots, \gamma_{i_0k_0,n}^{(f)}\right)=(0,\dots,0),
\end{equation}
or
\begin{equation}\label{gy}
\left(\gamma_{i_0k_0,1}^{(m)}, \dots, \gamma_{i_0k_0,\nu}^{(m)}\right)=(0,\dots,0).
\end{equation}
Consider the case (\ref{gx}) (the case (\ref{gy}) is similar). Take now some $s\in {\mathcal S}^{n,\nu}$
such that $x_{i_0}\ne 0$, $x_i=0$ for $i\ne i_0$  and $y_{k_0}\ne 0$, $y_k=0$ for $k\ne k_0$. Then for this $s$ we have
\[
x_{j}'=\frac{\sum_{i,k=1}^{n,\nu}\gamma_{ik,j}^{(f)}x_{i}y_{k}}{\Bigl(\sum_{i=1}^{n}x_{i}\Bigr)\left(\sum_{j=1}^{\nu}y_{j}\right)}=\frac{\gamma_{i_{0}k_{0},j}^{(f)}x_{i_{0}}y_{k_{0}}}{\Bigl(\sum_{i=1}^{n}x_{i}\Bigr)\left(\sum_{j=1}^{\nu}y_{j}\right)}=0,\ \ \mbox{for all}\ \ j=1,\dots,n,
\]
i.e., $s'\in\mathcal O$. This is contradiction to the assumption that
$s'\in \mathcal S^{n,\nu}$.

{\sl Sufficiency.} Assume the conditions (\ref{kp}) and (\ref{ec}) are satisfied, we want to show that if $s\in {\mathcal S}^{n,\nu}$ then
$s'=V(s)\in {\mathcal S}^{n,\nu}$. By the construction of the operator (\ref{v3n}) it is easy to see that $s'\in S^{n+\nu+1}$ so it remains to show
that $s'\notin \mathcal O$. Assume that $s'\in \mathcal O$, i.e, $(x'_1, \dots, x'_n)=(0,\dots,0)$ (the case
 $(y'_1, \dots, y'_\nu)=(0,\dots,0)$ is similar). Then by (\ref{v3n}) we should have
\begin{equation}\label{x0}
\sum_{i,k=1}^{n,\nu}\gamma_{ik,j}^{(f)}x_iy_k=0, \ \ \mbox{for each} \ \  j=1,\dots,n.
\end{equation}
Since $s\in {\mathcal S}^{n,\nu}$ there is $i_0\in \{1,\dots,n\}$  and $k_0\in\{1,\dots,\nu\}$ such that
$x_{i_0}>0$ and $y_{k_0}>0$. From our conditions it follows that $\gamma_{ik,j}^{(f)}x_iy_k\geq 0$,  for all $i,j,k$. Hence from (\ref{x0}) we get
$$\gamma_{i_0k_0,j}^{(f)}x_{i_0}y_{k_0}=0, \ \ \mbox{for each} \ \  j=1,\dots,n,$$
consequently,
$$\gamma_{i_0k_0,j}^{(f)}=0\ \mbox{for each} \ \  j=1,\dots,n.$$
This is contradiction to the condition (\ref{ec}).
\end{proof}

A fixed point $s=(x_1,\dots,x_n,y_1,\dots, y_\nu)$ of the gonosomal operator (\ref{v3}) is called non-negative and normalizeable
if all coordinates of this point are non-negative and $\sum_{i=1}^nx_i+\sum_{k=1}^\nu y_k>0$.

\begin{pro}\label{vr}
There is one-to-one correspondence between non-negative and
normalizeable fixed points of (\ref{v3}) and all fixed points of
(\ref{v3n}).
\end{pro}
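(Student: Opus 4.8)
The plan is to exploit the exact homogeneity relation between the two operators. Writing $X=\sum_{i=1}^{n}x_i$ and $Y=\sum_{k=1}^{\nu}y_k$, the defining formulas (\ref{v3}) and (\ref{v3n}) show that on $\mathcal{S}^{n,\nu}$ one has $V(s)=\frac{1}{XY}\,W(s)$, and that $W$ is homogeneous of degree two under uniform scaling, i.e.\ $W(cs)=c^{2}W(s)$ for every $c>0$, since each monomial $\gamma\,x_iy_k$ carries exactly one factor from the $x$-block and one from the $y$-block. These two facts let me convert a fixed point of one operator into a fixed point of the other by a single positive rescaling.

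First I would record the constraints forced on a fixed point of each operator. If $V(\tilde s)=\tilde s$ with $\tilde s\in\mathcal{S}^{n,\nu}$, then $\tilde s$ lies on the simplex, so $\tilde X+\tilde Y=1$, while $\tilde s\notin\mathcal O$ forces $\tilde X>0$ and $\tilde Y>0$; moreover $V(\tilde s)=\tilde s$ rewrites as $W(\tilde s)=\tilde X\tilde Y\,\tilde s$. Dually, if $W(s)=s$ is a non-negative normalizeable fixed point, then relation (\ref{Z}) applied at the fixed point gives $X+Y=XY$; non-negativity together with normalizeability ($X+Y>0$) excludes $X=0$ and $Y=0$, since either one would force $s=0$ through $X+Y=XY$, so $X>0$ and $Y>0$.

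Next I would define the two maps and check they invert one another. Given a fixed point $\tilde s$ of $V$, set $s=\frac{1}{\tilde X\tilde Y}\,\tilde s$; using $W(\tilde s)=\tilde X\tilde Y\,\tilde s$ and degree-two homogeneity one computes $W(s)=s$, and $s$ is visibly non-negative with coordinate sum $\frac{1}{\tilde X\tilde Y}>0$, hence normalizeable. Conversely, given a non-negative normalizeable fixed point $s$ of $W$, set $\tilde s=\frac{1}{X+Y}\,s=\frac{1}{XY}\,s$; then $\tilde s$ has coordinate sum $1$, its two blocks sum to $\frac{1}{Y}>0$ and $\frac{1}{X}>0$ so $\tilde s\in\mathcal{S}^{n,\nu}$, and $V(\tilde s)=\frac{1}{\tilde X\tilde Y}\,W(\tilde s)=\tilde s$ again follows from homogeneity. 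Composing the two assignments and tracking the scalars (for instance $\tilde X\tilde Y=\frac{1}{XY}$ after applying the second map, and $X+Y=\frac{1}{\tilde X\tilde Y}$ after the first) shows each is the inverse of the other, which yields the claimed bijection.

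The routine but essential bookkeeping, and the only place where care is genuinely needed, is the nonvanishing of the normalizing factors: one must use (\ref{Z}) together with the normalizeability hypothesis to guarantee $X,Y>0$ so that $\frac{1}{XY}$ is defined and positive, and correspondingly $\tilde X,\tilde Y>0$ for a fixed point of $V$. Once these positivity facts are secured, every scaling identity is a direct consequence of the degree-two homogeneity of $W$, and no case analysis is required.
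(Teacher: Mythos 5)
Your proof is correct and follows essentially the same route as the paper: both directions of the correspondence are given by the same rescalings, $\tilde s = s/(X+Y)$ with $X+Y=XY$ at a fixed point of $W$ (the paper's $Z$), and $s=\tilde s/(\tilde X\tilde Y)$ (the paper's $\tilde Z$), with the fixed-point equations transported via the quadratic homogeneity of $W$. You are somewhat more explicit than the paper in verifying that the two maps are mutually inverse and that the normalizing factors are strictly positive, but this is a refinement of the same argument, not a different one.
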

\begin{proof}
 Let  $s=(x_1,\dots,x_n,y_1,\dots, y_\nu)$ be a non-negative and normalizeable
fixed point of (\ref{v3}). Denote $Z=\sum_{i=1}^nx_i+\sum_{k=1}^\nu y_k$,
 and consider the point
 $$\tilde{s}=(x_1/Z,\dots,x_n/Z,y_1/Z,\dots, y_\nu/Z).$$
By (\ref{Z}) for the fixed point we have
\begin{equation}\label{Z1}
Z=\left(\sum_{i=1}^nx_i\right)\left(\sum_{k=1}^\nu y_k\right).
\end{equation}
Using formula (\ref{Z1}) one can see that $\tilde{s}$ is a fixed point of (\ref{v3n}).

Now let $\tilde{s}=(\tilde{x}_1,\dots,\tilde{x}_n,\tilde{y}_1,\dots, \tilde{y}_\nu)$ be a
fixed point of (\ref{v3n}), i.e. it satisfies the following system
\begin{equation}\label{v3nf}
\left\{ \begin{array}{ll}
\tilde{x}_{j}\;=\;\dfrac{\sum_{i,k=1}^{n,\nu}\gamma_{ik,j}^{(f)}\tilde{x}_{i}\tilde{y}_{k}}{\Bigl(\sum_{i=1}^{n}\tilde{x}_{i}\Bigr)\left(\sum_{j=1}^{\nu}\tilde{y}_{j}\right)}, & j=1,\dots,n\medskip\\[3mm]
\tilde{y}_{l}\;=\;\dfrac{\sum_{i,k=1}^{n,\nu}\gamma_{ik,l}^{(m)}\tilde{x}_{i}\tilde{y}_{k}}{\Bigl(\sum_{i=1}^{n}\tilde{x}_{i}\Bigr)\left(\sum_{j=1}^{\nu}\tilde{y}_{j}\right)}, & l=1,\dots,\nu.
\end{array}\right.
\end{equation}

Denote
\begin{equation}\label{Z2}
\tilde{Z}=\left(\sum_{i=1}^n\tilde{x}_i\right)\left(\sum_{k=1}^\nu \tilde{y}_k\right).
\end{equation}
Dividing both side of (\ref{v3nf}) to $\tilde{Z}$ it is easy to see that the following point is a fixed point of (\ref{v3}) :
$s=(\tilde{x}_1/\tilde{Z},\dots,\tilde{x}_n/\tilde{Z},\tilde{y}_1/\tilde{Z},\dots, \tilde{y}_\nu/\tilde{Z}).$
\end{proof}

\medskip

In this section we consider the normalized version of the evolution operator (\ref{W4}), i.e.,
\begin{equation}\label{W4n}
V:\begin{cases}
x'\;= & \dfrac{2xu+yu}{4(x+y)(u+v)}\medskip\\
y'\;= & \dfrac{6xv+3yu+4yv}{12(x+y)(u+v)}\medskip\\
u'\;= & \dfrac{6xu+6xv+3yu+4yv}{12(x+y)(u+v)}\medskip\\
v'\;= & \dfrac{3yu+4yv}{12(x+y)(u+v)}.
\end{cases}
\end{equation}
It is easy to see that the operator (\ref{W4n}) satisfies the conditions of Proposition \ref{pn}, hence
$V:\mathcal S^{2,2}\to\mathcal S^{2,2}$.

The following lemmas give some useful estimates.

\begin{lemma}\label{lo} Let $s=(x,y,u,v)\in \mathcal S^{2,2}$ and $s^{(1)}=(x',y',u',v')=V(s)$ for the operator (\ref{W4n})
then
\[
\begin{array}{ccc}
\qquad\tfrac{u}{4(u+v)}\;\leq & x' & \leq\;\begin{alignedat}{1}\tfrac{u}{2\left(u+v\right)}\;\leq\;\tfrac{1}{2},\qquad\qquad\end{alignedat}
\medskip\\
\qquad\tfrac{v}{3(u+v)}\;\leq & y' & \leq\;\tfrac{u+2v}{4(u+v)}\;\leq\;\tfrac{1}{2},\qquad\qquad\medskip\\
\tfrac{1}{4}\;\leq\;\tfrac{2x+y}{4(x+y)}\;\leq & u' & \leq\;\tfrac{3x+2y}{6(x+y)}\;\leq\;\tfrac{1}{2},\qquad\qquad\medskip\\
\qquad\tfrac{y}{4(x+y)}\;\leq & v' & \leq\;\tfrac{y}{3(x+y)}\;\leq\;\tfrac{1}{3},\qquad\qquad\medskip\\
\;\tfrac{1}{3}+\tfrac{u}{6(u+v)}\;\leq & x'+y' & \leq\;\tfrac{1}{2},\qquad\qquad\qquad\qquad\medskip\\
\quad\quad\qquad\tfrac{1}{2}\;\leq & u'+v' & \leq\;\tfrac{1}{2}+\tfrac{yv}{6(x+y)(u+v)}\;\leq\;\tfrac{2}{3},\;\;\medskip\\
\quad\quad\qquad v'\;\leq & y' & \leq\;u',\qquad\qquad\qquad\qquad\medskip\\
 & x' & \leq\;u'.\qquad\qquad\qquad\qquad
\end{array}
\]
\end{lemma}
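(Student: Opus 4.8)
The plan is to exploit that on $\mathcal{S}^{2,2}=S^{3}\setminus\mathcal{O}$ the denominator $(x+y)(u+v)$ is strictly positive, so every quotient in (\ref{W4n}) is well defined and, crucially, cross-multiplying any of the displayed inequalities by $(x+y)(u+v)$ preserves its direction. Since $x,y,u,v\ge 0$, each inequality will thereby collapse to the nonnegativity of a sum of the four monomials $xu$, $xv$, $yu$, $yv$, which is immediate. Concretely, I would first put every component over the common denominator $12(x+y)(u+v)$; rewriting the first line of (\ref{W4n}) as $x'=(6xu+3yu)/[12(x+y)(u+v)]$ gives all four coordinates a single denominator and makes the subsequent comparisons purely numerical.

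For the bounds on the individual coordinates I would use the factorizations $2xu+yu=u(2x+y)$ and $3yu+4yv=y(3u+4v)$. For example $x'=u(2x+y)/[4(x+y)(u+v)]$ together with the elementary estimate $1\le (2x+y)/(x+y)\le 2$ (valid because $x,y\ge 0$) yields at once $\tfrac{u}{4(u+v)}\le x'\le \tfrac{u}{2(u+v)}$, and $u\le u+v$ gives the trailing $\le\tfrac12$. The bounds for $y'$, $u'$, $v'$ follow the same template: clear denominators against the claimed bound and check that the resulting difference of numerators is a nonnegative combination, namely $2xv+3yu$ and $3xu+2yv$ for the two bounds on $y'$, $yv$ and $yu$ for those on $u'$, and $yv$ and $yu$ again for $v'$. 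The purely ``tail'' inequalities such as $\tfrac{u+2v}{4(u+v)}\le\tfrac12$, $\tfrac{2x+y}{4(x+y)}\ge\tfrac14$ and $\tfrac{y}{3(x+y)}\le\tfrac13$ reduce to $u\ge0$ or $x\ge0$ and need no reference to $V$ at all.

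For the two sum estimates I would compute $x'+y'$ and $u'+v'$ directly. The organizing identity is that the four numerators add up to $12(x+y)(u+v)$, so $x'+y'+u'+v'=1$ and the two sums are complementary; in fact one finds the exact identity $u'+v'=\tfrac12+\tfrac{yv}{6(x+y)(u+v)}$, from which both $u'+v'\ge\tfrac12$ and the stated upper bound follow, the final step $\le\tfrac23$ using $yv\le(x+y)(u+v)$. Dually $x'+y'=\tfrac12-\tfrac{yv}{6(x+y)(u+v)}$, giving $x'+y'\le\tfrac12$, while the sharper lower bound $\tfrac13+\tfrac{u}{6(u+v)}$ is obtained by a direct numerator comparison that reduces to $2xv\ge0$. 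The three ordering relations are the cleanest of all, since over the common denominator the numerator difference is exactly $6xv$ for $y'-v'$, $6xu$ for $u'-y'$, and $6xv+4yv$ for $u'-x'$, each manifestly nonnegative.

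The computations are individually trivial, so the only genuine difficulty is organizational: one must clear denominators against the correct target in each of the roughly twenty inequalities, keep the cross-multiplication direction straight (which is safe because the common denominator is positive), and avoid arithmetic slips in the multi-term numerators for $x'+y'$ and $u'+v'$. The single step that makes the whole lemma fall out is the opening common-denominator rewrite of $x'$ together with the two factorizations above; once these are in place every line reduces to the assertion that a nonnegative combination of $xu,xv,yu,yv$ is $\ge 0$.
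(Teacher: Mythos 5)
Your proposal is correct: every numerator difference you name ($2xv+3yu$ and $3xu+2yv$ for $y'$, $yv$ and $yu$ for $u'$ and $v'$, $2xv$ for the lower bound on $x'+y'$, the exact identity $u'+v'=\tfrac12+\tfrac{yv}{6(x+y)(u+v)}$, and $6xv$, $6xu$, $6xv+4yv$ for the orderings) checks out, and the paper itself gives no argument beyond the word ``Straightforward,'' so your direct common-denominator verification is precisely the computation the authors leave implicit.
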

\begin{proof} Straightforward.
\end{proof}

 \begin{lemma}\label{ly} Let $s=(x,y,u,v)\in \mathcal S^{2,2}$ and $s^{(n)}=(x^{(n)},y^{(n)},u^{(n)},v^{(n)})=V^n(s)$ for the operator (\ref{W4n})
then
\begin{itemize}
\item[1.] $${5\over 12}\leq x^{(2)}+y^{(2)}\leq {1\over 2};$$
\item[2.] There exists $\alpha\in (0,1)$ such that
$$v^{(n+1)}\leq \alpha y^{(n)}, \ \ n\geq 2.$$
\end{itemize}
\end{lemma}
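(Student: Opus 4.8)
The plan is to deduce both parts directly from the coordinatewise estimates of Lemma \ref{lo}, using only two structural facts: that $V$ maps $\mathcal S^{2,2}$ into itself, so that every iterate $s^{(n)}$ again lies in $\mathcal S^{2,2}$ and the bounds of Lemma \ref{lo} are available at each step, and the explicit form of the fourth coordinate of the operator (\ref{W4n}).

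For part 1, I would apply Lemma \ref{lo} to the point $s^{(1)}=V(s)$ rather than to $s$, so that $s^{(2)}=V(s^{(1)})$. The upper bound is then immediate: the estimate $x'+y'\leq\tfrac12$ of Lemma \ref{lo}, read at $s^{(1)}$, gives $x^{(2)}+y^{(2)}\leq\tfrac12$. For the lower bound I would use the sharper estimate $x'+y'\geq\tfrac13+\tfrac{u}{6(u+v)}$, again at $s^{(1)}$, which yields
\[
x^{(2)}+y^{(2)}\geq \frac13+\frac{u^{(1)}}{6\left(u^{(1)}+v^{(1)}\right)}.
\]
The only additional input is that $u^{(1)}\geq v^{(1)}$; this is exactly the ordering $v'\leq y'\leq u'$ of Lemma \ref{lo} applied at $s$, which forces $\tfrac{u^{(1)}}{u^{(1)}+v^{(1)}}\geq\tfrac12$ and hence $x^{(2)}+y^{(2)}\geq\tfrac13+\tfrac1{12}=\tfrac5{12}$.

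For part 2, I would start from the fourth coordinate of (\ref{W4n}), which factors as
\[
v^{(n+1)}=\frac{y^{(n)}\left(3u^{(n)}+4v^{(n)}\right)}{12\left(x^{(n)}+y^{(n)}\right)\left(u^{(n)}+v^{(n)}\right)},
\]
so it suffices to bound the coefficient of $y^{(n)}$ by a constant $\alpha<1$. Writing $3u^{(n)}+4v^{(n)}=3\left(u^{(n)}+v^{(n)}\right)+v^{(n)}$, this coefficient splits as $\tfrac{1}{4(x^{(n)}+y^{(n)})}+\tfrac{v^{(n)}}{12(x^{(n)}+y^{(n)})(u^{(n)}+v^{(n)})}$. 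I would then feed in three bounds valid for $n\geq2$: the lower bound $x^{(n)}+y^{(n)}\geq\tfrac5{12}$, the lower bound $u^{(n)}+v^{(n)}\geq\tfrac12$, and the upper bound $v^{(n)}\leq\tfrac13$. The last two come straight from Lemma \ref{lo}, while the first is part 1 applied along the shifted trajectory: since $V^{n-2}(s)\in\mathcal S^{2,2}$, part 1 for the initial point $V^{n-2}(s)$ gives $x^{(n)}+y^{(n)}\geq\tfrac5{12}$ for every $n\geq2$. Substituting produces $\alpha=\tfrac35+\tfrac2{15}=\tfrac{11}{15}<1$.

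The arithmetic is routine, so the genuine points of care are structural. The first is the bookkeeping of applying Lemma \ref{lo} at the correct iterate (at $s^{(1)}$ for part 1, at $s^{(n)}$ for part 2), together with the observation that the ordering $v^{(1)}\leq u^{(1)}$ is precisely what upgrades the generic bound $\tfrac13$ to $\tfrac5{12}$. The second, which I expect to be the main obstacle, is propagating part 1 to all $n\geq2$: this is where the restriction $n\geq2$ in the statement originates, and it relies on the invariance $V(\mathcal S^{2,2})\subset\mathcal S^{2,2}$ to shift the initial point to $V^{n-2}(s)$. Once the three numerical bounds are in hand, verifying $\alpha<1$ is a one-line computation.
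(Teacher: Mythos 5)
Your proof is correct, and while its overall skeleton matches the paper's, the key computation in part 2 is done by a genuinely different decomposition — one that, unlike the paper's, actually checks out. Part 1 coincides with the paper's argument verbatim: Lemma \ref{lo} applied at $s^{(1)}$ gives $\tfrac13+\tfrac{u^{(1)}}{6(u^{(1)}+v^{(1)})}\leq x^{(2)}+y^{(2)}\leq\tfrac12$, and the ordering $v^{(1)}\leq u^{(1)}$ (Lemma \ref{lo} at $s$) upgrades the lower bound to $\tfrac13+\tfrac1{12}=\tfrac5{12}$. For part 2 the paper, like you, reduces everything to bounding the coefficient of $y^{(n)}$ in the formula for $v^{(n+1)}$ by a constant below $1$, but it eliminates $u$ via $u=1-x-y-v$ and splits the coefficient as a difference, $\psi=\tfrac1{3(1-u-v)}-\tfrac{u}{12(u+v)(1-u-v)}$, claiming $\psi\leq\tfrac23-\tfrac18=\tfrac{13}{24}$; this requires a lower bound on the subtracted term and the upper bound $\tfrac1{3(1-u-v)}\leq\tfrac23$, i.e. $u^{(n)}+v^{(n)}\leq\tfrac12$ — which is the reverse of what Lemma \ref{lo} provides ($u'+v'\geq\tfrac12$), so the paper's stated constants do not follow from the inequalities it cites (its conclusion survives with corrected constants, e.g. $\psi\leq\tfrac45-\tfrac1{12}$ using part 1). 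Your split $3u^{(n)}+4v^{(n)}=3\bigl(u^{(n)}+v^{(n)}\bigr)+v^{(n)}$ instead writes the coefficient as a sum of two positive terms, $\tfrac1{4(x^{(n)}+y^{(n)})}+\tfrac{v^{(n)}}{12(x^{(n)}+y^{(n)})(u^{(n)}+v^{(n)})}$, so only upper bounds are needed, and your three inputs — $x^{(n)}+y^{(n)}\geq\tfrac5{12}$ for $n\geq2$ (part 1 shifted along the trajectory via $V(\mathcal S^{2,2})\subset\mathcal S^{2,2}$, which is indeed where the restriction $n\geq2$ originates), $u^{(n)}+v^{(n)}\geq\tfrac12$, and $v^{(n)}\leq\tfrac13$ — are all legitimate consequences of Lemma \ref{lo}. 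The arithmetic giving $\alpha=\tfrac35+\tfrac2{15}=\tfrac{11}{15}<1$ is correct. In short: same strategy, but your all-positive decomposition yields a constant that is actually justified, at the mild cost of a slightly larger $\alpha$ than the paper's (unsubstantiated) $\tfrac{13}{24}$.
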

\begin{proof} 1. Using Lemma \ref{lo} we have
 $${1\over 3}+{u'\over 6(u'+v')}\leq x^{(2)}+y^{(2)}\leq {1\over 2}.$$
 Consequently, since $u'\geq v'$ we get
 $${1\over 3}+{u'\over 6(u'+v')}\geq {1\over 3}+{1\over 12}={5\over 12}.$$

2. Using $u=1-x-y-v$ we can rewrite the operator (\ref{W4n}) as
\begin{equation}\label{W4s}
V:\left\{\begin{array}{llll}
x'={(2x+y)(1-x-y-v)\over 4(x+y)(1-x-y)}\\[3mm]
y'={3y(1-x-y)+(6x+y)v\over 12(x+y)(1-x-y)}\\[3mm]
v'=y\cdot{3(1-x-y)+v \over 12(x+y)(1-x-y)}.
\end{array}\right.
\end{equation}
For any $n\geq 1$ we have
\begin{equation}\label{Wn}
V^{n+1}\left\{\begin{array}{llll}
x^{(n+1)}={(2x^{(n)}+y^{(n)})(1-x^{(n)}-y^{(n)}-v^{(n)})\over 4(x^{(n)}+y^{(n)})(1-x^{(n)}-y^{(n)})}\\[3mm]
y^{(n+1)}=v^{(n+1)}+\varphi(x^{(n)},y^{n})v^{(n)}\\[3mm]
v^{(n+1)}=\psi(x^{(n)},v^{(n)})y^{(n)},
\end{array}\right.
\end{equation}
where
$$\varphi(x,y)={x\over 2(x+y)(1-x-y)}, \ \ \psi(x,v)={3(1-x-y)+v\over 12(x+y)(1-x-y)}={3u+4v\over 12(u+v)(1-u-v)}.
$$
Using above mentioned inequalities we get
  $$\varphi(x,y)\leq 1, \ \ \psi(x,v)={1\over 3(1-u-v)}-{u\over 12(u+v)(1-u-v)}\leq {2\over 3}-{1\over 8}={13\over 24}.$$

Consequently we get from (\ref{Wn}) the following
  \begin{equation}\label{Wn1}
  \left\{\begin{array}{ll}
y^{(n+1)}\leq v^{(n+1)}+v^{(n)}\\[3mm]
v^{(n+1)}\leq {13\over 24}y^{(n)}.
\end{array}\right.
\end{equation}
This completes the proof.
\end{proof}

\begin{thm}\label{tl} The operator (\ref{W4n}) has a unique fixed point $p=(1/2,0,1/2,0)$ and there is
an open neighborhood $\mathcal{U}(p)\subset \mathcal S^{2,2}$ of $p$ such that for any initial point
$s\in \mathcal U(p)$ we have
$$\lim_{n\to\infty}V^n(s)=p.$$
\end{thm}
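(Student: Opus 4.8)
The plan is to treat the two assertions separately: uniqueness of $p$ follows formally from results already proved, whereas the local attraction is a genuinely nonlinear (non-hyperbolic) problem. For uniqueness I would combine Proposition \ref{vr} with Proposition \ref{pf}. Since (\ref{W4n}) is the normalization of (\ref{W4}), Proposition \ref{vr} puts its fixed points in bijection with the non-negative, normalizeable fixed points of (\ref{W4}). Proposition \ref{pf} lists the latter as $s_0=(0,0,0,0)$ and $s_2=(2,0,2,0)$; the first is not normalizeable (coordinate sum $0$), while $s_2$ is non-negative with $Z=4>0$, and dividing by $Z$ yields exactly $p=(1/2,0,1/2,0)$. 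A one-line substitution confirms $V(p)=p$, so $p$ is the unique fixed point.

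For the attraction I would first reduce to the two hemophilia coordinates. Using $u=1-x-y-v$ on the simplex I work with the reduced form (\ref{W4s}) in $(x,y,v)$, where $p$ is $(1/2,0,0)$. If $y^{(n)}\to0$ and $v^{(n)}\to0$, the $x'$-equation of (\ref{W4s}) forces $x^{(n)}\to1/2$, hence $u^{(n)}\to1/2$; thus everything reduces to sending the pair $(y^{(n)},v^{(n)})$ to $0$.

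The main obstacle is that $p$ is \emph{not} hyperbolic. The Jacobian of (\ref{W4s}) at $(1/2,0,0)$ has eigenvalues $0,\,-\tfrac12,\,1$ (matching the tangential eigenvalues $0,-\tfrac12,1$ of $J(s_2)$ for $W$, the extra eigenvalue $2$ there being the direction removed by normalization). Because of the neutral eigenvalue $1$, no contraction-mapping or spectral-radius argument applies directly. Indeed the estimates of Lemma \ref{ly}, $v^{(n+1)}\le\alpha y^{(n)}$ and $y^{(n+1)}\le v^{(n+1)}+v^{(n)}$, give only $y^{(n+1)}\le\alpha\,(y^{(n)}+y^{(n-1)})$, whose characteristic root is $\ge1$ as soon as $\alpha\ge\tfrac12$; and since $\psi\ge\tfrac12$ everywhere on $\mathcal S^{2,2}$ the linear part is exactly borderline.

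Convergence must therefore come from the quadratic part of $V$. My plan is to sharpen the factor $\varphi$ in (\ref{Wn}): since $x+y\le\tfrac12$ after one step one has $\varphi(x,y)\le1-2y$, so that $y^{(n+1)}=v^{(n+1)}+\varphi^{(n)}v^{(n)}\le v^{(n+1)}+v^{(n)}-2y^{(n)}v^{(n)}$, a bound whose strictly negative correction vanishes only on $\{y=0\}$. Equivalently, a direct computation from (\ref{Wn}) gives $(y+v)^{(n+1)}=\tfrac{y}{2t}+\tfrac{v}{2(1-t)}-\tfrac{yv}{3t(1-t)}$ with $t=x+y$, i.e. $y+v$ changes by $\tfrac{\e}{t}y-\tfrac{\e}{1-t}v-\tfrac{yv}{3t(1-t)}$ where $\e=\tfrac12-t$. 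I would fix $\mathcal U(p)$ small and use $y+v$ as a Lyapunov function. The genuinely delicate point is that the positive term $\tfrac{\e}{t}y$ is cancelled only once one knows that $\e=\tfrac12-(x+y)$ is itself of quadratic size near $p$; this is exactly the content of the center direction $(-2,2,1)$ of the Jacobian, along which $x+y$ stays $\tfrac12$ to first order. So the step I expect to be hardest is controlling the $x$-coordinate well enough to make $y^{(n)}+v^{(n)}$ eventually non-increasing. Granting this, $y^{(n)}+v^{(n)}$ converges, and a positive limit is excluded: since $v^{(n)}\ge\tfrac12 y^{(n-1)}$ and $y^{(n)}\ge\tfrac12 y^{(n-1)}$, a positive $\limsup y^{(n)}$ would keep $y^{(m)}v^{(m)}$ bounded away from $0$ along a subsequence, forcing $y^{(n)}+v^{(n)}$ to decrease without bound. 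Hence $y^{(n)},v^{(n)}\to0$, so $x^{(n)},u^{(n)}\to\tfrac12$ and $V^n(s)\to p$, the convergence being only sub-exponential because of the neutral eigenvalue.
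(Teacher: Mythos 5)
Your uniqueness argument is exactly the paper's: Proposition \ref{pf} plus Proposition \ref{vr}, discarding $s_0$ as non-normalizeable and normalizing $s_2=(2,0,2,0)$ to $p$. For the attraction statement, however, the two arguments part ways. The paper's proof is a three-line spectral computation: eliminate one variable, compute the $3\times 3$ Jacobian at $p$, observe the eigenvalues $-\tfrac12,0,1$, declare the point attractive and cite \cite[Theorem 6.3]{D}. Your spectrum and your center direction $(-2,2,1)$ agree with that computation, but you refuse the spectral shortcut on the grounds that the eigenvalue $1$ makes $p$ non-hyperbolic --- and that objection is sound: by the paper's own definition of hyperbolicity $p$ is not a hyperbolic fixed point, and a theorem deducing attraction from eigenvalues strictly inside the unit circle cannot be invoked when one eigenvalue equals $1$; whether $p$ attracts along the direction $(-2,2,1)$ is decided by the quadratic terms, not by the linearization. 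So your diagnosis correctly identifies a weakness that the paper's own one-line inference glosses over.

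The problem is that your proposal does not close this issue either. The decisive claim --- that near $p$ the Lyapunov quantity $y^{(n)}+v^{(n)}$ is eventually non-increasing, equivalently that $\varepsilon=\tfrac12-(x+y)$ is of quadratic size so that the negative correction $-\tfrac{yv}{3t(1-t)}$ dominates the positive term $\tfrac{\varepsilon}{t}\,y$ --- is stated as the hardest step and then assumed ("Granting this"). That is not a routine verification to be filled in later: it is the entire content of the non-hyperbolic attraction statement, and the linear information available (from Lemma \ref{ly}, $\varphi\le 1$ and $\psi\le\tfrac{13}{24}$, whose two-step recursion has characteristic root larger than $1$, as you note) provably cannot supply it. The crude bounds one gets from Lemma \ref{lo} give only $\varepsilon^{(n+1)}\le\tfrac{v^{(n)}}{6(u^{(n)}+v^{(n)})}$, i.e.\ $\varepsilon$ linearly small in $v$, which is not yet enough to cancel $\tfrac{\varepsilon}{t}y$ against $\tfrac{yv}{3t(1-t)}$ without a sharper constant-tracking argument; your own back-of-envelope constants ($v^{(n)}\ge\tfrac12 v^{(n-1)}$ versus the needed $v^{(n)}\ge\tfrac35 v^{(n-1)}$) sit exactly on the wrong side. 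The concluding exclusion of a positive limit also has an untreated case ($\limsup y^{(n)}=0$ with $\limsup v^{(n)}>0$), and the claim $\psi\ge\tfrac12$ holds only after one iteration (when $u+v\ge\tfrac12$), not on all of $\mathcal S^{2,2}$. In short: the uniqueness half is correct and identical to the paper; the attraction half is an honest, correctly aimed program --- arguably more rigorous in intent than the paper's own citation-based argument --- but its central step is missing, so as it stands it is not a proof.
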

\begin{proof} The existence and uniqueness of $p$ follow from Propositions \ref{pf} and  \ref{vr}.
To check the Jacobi matrix
of the operator (\ref{W4n}) at the fixed point $p$, one has to replace $v$ by $v=1-x-y-u$ in the operator
then construct a $3\times 3$ Jacobi matrix. It is easy to see that this matrix at the fixed point has eigenvalues
$-0.5, 0, 1$, i.e the point is attractive, so \cite[Theorem 6.3]{D} completes the proof.
\end{proof}

Using Lemma \ref{lo} and Lemma \ref{ly} one can see that the trajectory of
any initial point after few iterations comes
close to the fixed point $p$. Moreover, using Maple one can see
that the limit point of the trajectory is always $p$.
Thus the following should be true\\

{\bf Conjecture.} For any initial point
$s\in \mathcal S^{2,2}$ we have $\lim_{n\to\infty}V^n(s)=p.$

\begin{rk} The results have the following biological interpretations:
 Let $s=(x,y,u,v)\in \mathcal S^{2,2}$ be an initial state
 (the probability distribution on the set $\{XX, XX^h; XY, X^hY\}$ of genotypes).
 Theorem \ref{tl} says that, as a rule, the population tends to the equilibrium state $p=(1/2,0,1/2,0)$ with the passage
of time, i.e. the future of the population is stable: genotypes $XX$ and $XY$ are survived always, 
but the genotypes $XX^h$ and $X^hY$ (therefore hemophilia) will disappear in the future. It follows that hemophilia is maintained in a population only if it occurs mutations on the genes coding for the coagulation factors.
\end{rk}

\section*{ Acknowledgements}
U.Rozikov thanks Aix-Marseille University Institute for Advanced Study IM\'eRA
(Marseille, France) for support by a residency scheme. His work also partially supported by the Grant No.0251/GF3 of Education and Science Ministry of Republic
of Kazakhstan.

\end{document}